\newtheorem{thm}{Theorem}[section]
\newtheorem{proposition}[thm]{Proposition}
\theoremstyle{definition}
\newtheorem{definition}[thm]{Definition}
\theoremstyle{remark}
\numberwithin{equation}{section}
\begin{document}

\title{Minimality of the $\mathcal D$-groupoid of symmetries of a projective structure}

\author{Guy Casale}
\address{Univ Rennes, CNRS, IRMAR-UMR 6625, F-35000 Rennes, France}
\email{guy.casale@univ-rennes1.fr}

\author{David Bl\'azquez-Sanz}
\address{Escuela de Matemáticas, Universidad Nacional de Colombia, Medell\'in}
\email{dblazquezs@unal.edu.co}

\author{Alejandro Arenas Tirado}
\address{Escuela de Matemáticas, Universidad Nacional de Colombia, Medell\'in}
\email{aarenast@unal.edu.co}

\begin{abstract}
    In this article we study Kummer's $\mathcal D$-groupoid, which is the groupoid of symmetries of a meromorphic projective structure. We give necessary and sufficient conditions for its minimality, in the sense of not having infinite sub-$\mathcal D$-groupoids. The condition that we find turns out to be equivalent to the strong minimality of the non-linear Schwarzian equation and the non-integrability by means of Liouvillian functions of the linear Schwarzian equation. \\
 
    \noindent\textbf{keywords}: $\mathcal{D}$-groupoid, Schwarzian equation, Schwarzian derivative, Strong minimality,  Symmetric Power, Lie groupoid. 
\end{abstract}

\maketitle

\tableofcontents

\section{Introduction}
The {\it Schwarzian derivative} of a meromorphic function $f \in \mathcal{K}$ (the field of meromorphic functions over an open subset of a Riemann surface) with respect to a coordinate $z$ is defined by the expression:

$$S_{z}(f) = \left(\frac{f''}{f'} \right)' - \frac{1}{2}\left(\frac{f'' }{f'} \right)^2  \ \textrm{ where }\  ' = \frac{d}{dz}$$
 
Particularly, the Schwarzian derivative classifies the functions related by the action of ${\rm PSL}_2(\mathbb C)$ on $\overline{\mathbb C}$: 

$$S_z(f) = S_z (g) \Longleftrightarrow \exists \sigma \in {\rm PSL}_2(\mathbb{C})\ \textit{s.t.}\ f = \sigma \circ g$$

Modular functions satisfy a differential equation of the form:

\begin{equation}\label{eq:es1}
S_{\tau}(\lambda(\tau)) + \lambda'(\tau)^2R(\lambda(\tau)) = 0,
\end{equation}

\noindent where $R(\lambda) \in \mathbb{C}(\lambda)^{\rm alg}$ is an algebraic function \cite{yoshida2013fuchsian}. K. Mahler proved that these function do not satisfy lower order equations \cite{Mahler}. The chain rule for the Schwarzian derivative allows us to invert the above equation obtaining a linear-Schwarzian equation that is geometrically equivalent:

\begin{equation}\label{eq:es2}
S_{\lambda}(\tau(\lambda)) = R(\lambda).    
\end{equation}

Even if this equation is not linear, it is classical fact explained in section 1.1 that this equation is linearizable.  

Differential algebraic properties of modular functions are relevant and have been recently studied in connection with model theory and hypertranscendence theory \cite{aslanyan2021ax}, \cite{klingler2016hyperbolic}, \cite{pila2013modular}, \cite{pila2016ax}. 
In particular, it was shown in  \cite{blazquez2020some}, \cite{casale2020axlindemannweierstrass} that, for any particular algebraic function $R(\lambda)\in \mathbb{C}(\lambda)^{\rm alg}$, the strong minimality of the equation \eqref{eq:es1}, is equivalent to the non-integrability of equation \eqref{eq:es2} by means of Liouvillian functions.

In this article we study the symmetries of differential equations \eqref{eq:es1} and \eqref{eq:es2} given by change of variable $(\lambda, \tau) \mapsto (\varphi(\lambda), \sigma(\tau))$ with $\sigma \in {\rm PSL}_2(\mathbb C)$ and $\varphi$ a local biholomorphism. The object that describes these symmetries is a $\mathcal D$-groupoid defined by a third order equation on $\varphi(\lambda)$ that was first studied by Kummer in relation with hypergeometric functions \cite{kummer}. It is given by the differential equation :

\begin{equation}\label{eq:kummer}
S_{\lambda}(\varphi(\lambda)) + \varphi'(\lambda)^2R(\varphi(\lambda)) = R(\lambda)
\end{equation}

whose solutions are stable by compositions. It is called Kummer groupoid.

Our main result (Theorem \ref{MR}) states that, if equation \eqref{eq:es2} has no Liouvillian solutions then the Kummmer groupoid \eqref{eq:kummer} is minimal in the sense of not having non-trivial sub-$\mathcal D$-groupoids of order greater than $0$. The criterium for the minimality of this $\mathcal D$-groupoid coincides with a recently discovered criterium for the strong minimality of \eqref{eq:es1}. As a consequence, we obtain (see Proposition \ref{prop_final} numerals 2 and 4):

\noindent {\bf Theorem.} \ {\it The Schwarz equation \eqref{eq:es1} is strongly minimal if and only if the Kummer groupoid \eqref{eq:kummer} is minimal.}

If there is a direct connection between the strong minimality of differential equations and minimality of their $\mathcal D$-groupoids of symmetries in some more general context remains unclear and is an interesting question. 

In subsection \ref{SS.11} we explain our take on Schwarzian equations using the jet spaces and invariant connections. The interested reader may find additional information in \cite{casale2020axlindemannweierstrass, blazquez2020some}. Subsection \ref{SS.12} contains some well known tools of Picard-Vessiot theory that we will use in the proof of our result, main sources are \cite{KOVACIC19863, crespoalgebraic, van2003galois}. Section \ref{S.2} is devoted to the definition of $\mathcal D$-groupoid and its linearization. Here we give a synthetic exposition with the purpose of just give the necessary tools to the reader. The main reference is \cite{malgrange2010pseudogroupes} and there is also a general presentation of the subject in \cite{blazquez2020malgrange}. Finally in section \ref{S.3} we go into the exploration of Kummer's groupoid. The computation of the equation of Kummer's groupoid and its linealization is classical, then we go into the proof of some preliminary results and then our main result.

\subsection{The Schwarzian equation as a ${\rm PSL}_2$-connection}\label{SS.11}

We consider an algebraic curve $X$ as a suitable ramified covering of $\bar{\mathbb C}$ so that the algebraic function $R(\lambda) \in \mathbb C(X)$ is seen as a rational function on $X$. We also remove from $X$ all the zeroes and poles of $d\lambda$ and poles of $R(\lambda)$ so that the vector field $\frac{d}{d\lambda}$ has neither zeroes nor poles on $X$ and $R(\lambda)$ is a regular function on $X$. 

We see equations \eqref{eq:es1} and \eqref{eq:es2} geometrically as foliations in the jet space. Thus, we consider 
$J = J_*^2(\overline{\mathbb C}, X)$ the variety of $2$-jets of local biholomorphisms from $\overline{\mathbb C}$ to $X$. An affine open subset of this jet space is 
$\mathbb C \times X \times \mathbb C^* \times \mathbb C$ where the $4$-tuple $(\tau, \lambda, \lambda_\tau, \lambda_{\tau\tau})$ represents the order $2$ development of a biholomorphim sending $\tau$ to $\lambda$ with derivatives $\lambda_\tau$, and $\lambda_{\tau\tau}$. Differential equation \eqref{eq:es1} is seen  geometrically as the equations of the integral curves of the foliation $\mathcal F = \langle D_{\tau} \rangle$ generated by the vector field $D_{\tau}$ in the jet space $J$:
$$D_{\tau} = \frac{\partial}{\partial \tau} + \lambda_\tau\frac{\partial}{\partial\lambda} + \lambda_{\tau\tau}\frac{\partial}{\partial \lambda_{\tau}} + 
\left(\frac{3}{2}\frac{\lambda_{\tau\tau}^2}{\lambda_{\tau}} + \lambda_{\tau}^3R(\lambda)\right)\frac{\partial}{\partial \lambda_{\tau\tau}}.$$
The integral curves of $\mathcal F$ are the graphs of the $2$-jet prolongations 
$$j^2\hat \lambda \colon \overline{\mathbb C} \dasharrow J, \quad  \tau \mapsto (\tau, \hat\lambda(\tau), \hat\lambda_\tau(\tau), \hat\lambda_{\tau\tau}(\tau))$$ of solutions $\hat\lambda(\tau)$ of \eqref{eq:es1}.

On the other hand, note that the inversion of biholomorphims gives an isomorphism $J \simeq J_*^2(X, \overline{\mathbb C})$ that corresponds just to the interchange between the dependent and independent variable $\lambda$ and $\tau$. We consider this isomorphism just as change of coordinates in $J$ giving rise to new coordinates $\lambda, \tau, \tau_{\lambda}, \tau_{\lambda\lambda}$ for which the same foliation $\mathcal F$ represents equation \eqref{eq:es2}.

A projective structure on $X$ is a maximal atlas $\{(\mathcal U_i, \tau_i)\}_{i\in I}$ of coordinates $\tau_i \colon X \to \overline{\mathbb C}$ with the property that transition functions in $\overline{\mathbb C}$ are elements of ${\rm PSL}_2(\mathbb C)$. An important feature about equation \eqref{eq:es2} is that it defines a projective structure on $X$. Note that, if $\tau(\lambda)$ is a solution of \eqref{eq:es2} then any other solution with the same domain of definition is of the form $g(\tau(\lambda))$ for some $g\in {\rm PSL}_2(\mathbb C)$.

This structure of the solution space of \eqref{eq:es2} is reflected in the jet space. The action of ${\rm PSL_2}$ on $\overline{\mathbb C}$ lifts to an action on $J \simeq J_*^2(X, \overline{\mathbb C})$. The natural projection $\pi\colon J\to X$ is a principal bundle with structure group ${\rm PSL}_2$. 
 
The foliation $\langle D_{\tau} \rangle$ turns out to be a ${\rm PSL}_2$-invariant connection. Therefore, the theory of strongly normal extensions can be applied to the equation \eqref{eq:es2}; its Galois group will be an algebraic subgroup of ${\rm PSL}_2$.

Indeed, there is a well known explicit relation between the linear-Schwarzian equation \eqref{eq:es2} and the second order linear differential equation,

\begin{equation}\label{eq:linear}
\frac{d^2\psi}{d\lambda^2} = -\frac{1}{2}R(\lambda)\psi.
\end{equation}

Namely, the quotient $\tau = \psi_1/\psi_2$ between two any linearly independent solutions $\psi_1$, $\psi_2$ or \eqref{eq:linear} is a solution of \eqref{eq:es2}. This relation can be seen geometrically as an equivariant $2$-cover of principal bundles,

$${\rm SL}_2(\mathbb C) \times X \to J $$
$$\left(
\left[
\begin{array}{cc} a & b \\ c & e \end{array}
\right], \lambda
\right)  \mapsto j^2_\lambda  \left( \frac{a\tau + b}{c\tau + e}\right)$$
that maps the companion system

\begin{equation}\label{eq:companion}
\frac{d}{d\lambda}
\left[
\begin{array}{cc} \psi_{1} & \psi_{2} \\ \psi_{1}' & \psi_{2}' \end{array}
\right] = \left[
\begin{array}{cc} 0 & 1 \\ -\frac{1}{2}R(\lambda) & 0 \end{array}
\right]
\left[
\begin{array}{cc} \psi_{1} & \psi_{2} \\ \psi_{1}' & \psi_{2}' \end{array}
\right] 
\end{equation}

of equation \eqref{eq:linear} to the linear-Schwarzian equation \eqref{eq:es2}. Note that this map is well defined except on the singularities of $d\lambda$ that we already removed from our algebraic curve $X$.
In \cite{blazquez2020some} and \cite{casale2020axlindemannweierstrass} it is shown that the equation \eqref{eq:es1} is strongly minimal if and only if the Galois group of the equation \eqref{eq:linear} is exactly ${\rm SL}_2$. This last condition is equivalent to the non-integrability  of either \eqref{eq:es2} or \eqref{eq:linear} by means of Liouvillian functions.

\subsection{Some relevant facts of Picard-Vessiot theory}\label{SS.12}

Let us discuss here some facts around the integrability of equations \eqref{eq:es2} and \eqref{eq:linear} in the context of Picard-Vessiot theory, which is the part of differential Galois theory that deals with linear differential equations.  The interested reader may consult section 7.3 of \cite{crespoalgebraic} or \cite{KOVACIC19863} for the original source. There is also slightly more extended exposition of these facts in \cite{blazquez2020some}.
In our case the field of coefficients is $\mathcal K = \mathbb C(X)$ endowed of the derivation $\frac{d}{d\lambda}$. All the facts explained in this section hold for any differential field of characteristic zero with algebraically closed field constants.

Let us consider the differential field extension $\mathcal K \subseteq \mathcal K\langle \psi_1,\psi_2\rangle$ spanned by two linearly independent solutions of \eqref{eq:linear}. The differential Galois group of equation \eqref{eq:linear} is the group of differential field automorphisms $ G = {\rm Aut}(\mathcal K\langle \psi_1,\psi_2\rangle / \mathcal K)$. This group is naturally represented as an algebraic subgroup of ${\rm SL}_2(\mathbb C)$. For each $\sigma \in G$ we have,

\begin{equation}\label{sigma}
\left[
\begin{array}{cc} \sigma(\psi_{1}) & \sigma(\psi_{2}) \end{array}
\right] = 
\left[
\begin{array}{cc} \psi_{1} & \psi_{2} \end{array}
\right].
\left[
\begin{array}{cc} a_{\sigma}  & c_{\sigma} \\ b_{\sigma} & e_{\sigma} \end{array}
\right]
\end{equation}

The fundamental theory of integrability by Liouvillian functions in differential Galois theory says that linear differential equation as \eqref{eq:es2} is integrable by mean of Liouvillian functions if and only if the Lie algebra of $G$ is solvable. On the other hand, all proper subgroups of ${\rm SL}_2(\mathbb C)$ have solvable Lie algebra. Eigenvectors of the action of Lie algebra of $G$ on the vector space of solutions are related with algebraic solutions of the auxiliar Riccati equation, 

\begin{equation} \label{riccati1}
    u' + u^2 +\frac{1}{2}R(\lambda)= 0
\end{equation}

satisfied by $u = \frac{d \log \psi}{d\lambda}$, the logarithmic derivative of a solution of \eqref{eq:linear}. The following proposition accounts for the preliminary considerations before Kovacic's algorithm in \cite{KOVACIC19863} and are well known facts in the context of Picard-Vessiot theory.

\begin{proposition}\label{T:44}
Let us consider the differential equation \eqref{eq:linear}. There are the following four mutually exclusive possibilities for the solutions:
\begin{description}
    \item[(Case 1)] The Ricatti equation \eqref{riccati1} has at least a solution $u\in \mathcal K$.  $\psi = e^{\int u}$ is a Liouvillian solution of
    \ref{eq:linear} and the Galois group $G$ is conjugated to a group of triangular matrices.
    \item[(Case 2)]  The Ricatti equation \eqref{riccati1} has a pair of conjugated solutions $u_{\pm}$ that are algebraic of degree $2$ over $\mathcal K$. 
    $\psi_{\pm} = e^{\int u_{\pm}}$, are algebraically independent Liouvillian solutions of \eqref{eq:linear} and the Galois group $G$ is conjugated to a subgroup of the infinite dihedral group. 
    \item[(Case 3)] All solutions of \eqref{eq:linear} are algebraic over $\mathcal K$ and the Galois group $G$ is conjugated to a finite crystallographic group.
    \item[(Case 4)] Equation \eqref{eq:linear} has no Liouvillian solution and the Galois group is $G = {\rm SL}_2(\mathbb C)$. 
\end{description} 
\end{proposition}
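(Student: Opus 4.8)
The plan is to establish the four-case dichotomy by analyzing the action of the Galois group $G$ on the solution space of the linear equation \eqref{eq:linear}, exploiting that $G$ is an algebraic subgroup of ${\rm SL}_2(\mathbb C)$ and that the structure of its proper algebraic subgroups is completely classified. The key input is the correspondence between invariant lines in the solution space (equivalently, eigenvectors of the Lie algebra $\mathfrak g$ of $G$) and algebraic solutions of the Riccati equation \eqref{riccati1}: if $\psi$ is a solution of \eqref{eq:linear} then $u=\frac{d\log\psi}{d\lambda}$ satisfies \eqref{riccati1}, and $u\in\mathcal K$ precisely when the line $\langle\psi\rangle$ is $G$-invariant, while $u$ algebraic of degree $d$ corresponds to a $G$-orbit of lines of cardinality $d$.

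First I would recall the classification of proper algebraic subgroups of ${\rm SL}_2(\mathbb C)$ up to conjugacy: every such subgroup is contained in a Borel (triangular) subgroup, or is contained in the normalizer of a maximal torus (the infinite dihedral group of the diagonal torus together with the Weyl element), or is one of the finitely many exceptional finite subgroups (cyclic, binary dihedral, tetrahedral, octahedral, icosahedral). I would then translate each of these possibilities into statements about invariant lines in $\mathbb C^2$ under the natural action. A subgroup fixing a single line gives exactly one $G$-invariant line; a subgroup contained in the torus normalizer but not in the torus fixes an unordered pair of lines interchanged by the Weyl element; and the finite exceptional groups fix no line and no pair of lines but have all orbits finite. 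Since $G$ acts on solutions through \eqref{sigma}, a $G$-invariant line of solutions descends to an element of $\mathcal K$ by Galois descent (the invariants of $\mathcal K\langle\psi_1,\psi_2\rangle$ under $G$ are exactly $\mathcal K$), which yields the rationality of the corresponding Riccati solution in Case 1 and its degree-two algebraicity in Case 2.

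I would then match each group-theoretic alternative to the four cases: the reducible (Borel) case with at least one invariant line is Case 1, where $\psi=e^{\int u}$ with $u\in\mathcal K$ is Liouvillian and $G$ is triangular; the torus-normalizer case with a stable pair but no stable line is Case 2, producing the conjugate pair $u_\pm$ of degree two and $G$ inside the infinite dihedral group; the finite exceptional case is Case 3, where finiteness of all orbits forces all solutions to be algebraic over $\mathcal K$; and the remaining possibility $G={\rm SL}_2(\mathbb C)$, having no invariant line, no invariant pair, and no finite orbits, is Case 4 with no Liouvillian solutions. The Liouvillian conclusions follow from the Lie-algebra solvability criterion quoted in the excerpt: $G$ is a proper algebraic subgroup if and only if $\mathfrak g$ is solvable if and only if \eqref{eq:linear} is integrable by Liouvillian functions, so Case 4 is exactly the non-solvable case. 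Mutual exclusivity is immediate since the cases are distinguished by the minimal degree (one, two, finite but larger, or infinite) of an algebraic Riccati solution, equivalently by the conjugacy type of $G$, and these are pairwise disjoint invariants.

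The main obstacle I expect is the precise bookkeeping in Case 2 and Case 3: one must verify that when $G$ stabilizes a pair of lines without stabilizing either line individually, the two logarithmic derivatives $u_\pm$ are genuinely a conjugate pair of degree exactly two over $\mathcal K$ (not accidentally rational, which is excluded because no single line is invariant), and that the algebraic closure does not collapse the dihedral structure; similarly, in Case 3 one must confirm that finiteness of the Galois group forces every solution, not merely the logarithmic derivatives, to be algebraic, which uses that $\mathcal K\langle\psi_1,\psi_2\rangle/\mathcal K$ is a finite extension exactly when $G$ is finite. The rest is a matter of invoking the standard subgroup classification and the invariants-equal-$\mathcal K$ descent, both of which are part of the Picard--Vessiot toolkit recalled above.
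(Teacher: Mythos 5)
Your argument is correct and is essentially the standard one the paper relies on: the paper gives no proof of this proposition, instead citing it as the well-known preliminary classification from Kovacic's paper and Picard--Vessiot theory, and your reconstruction via the conjugacy classification of algebraic subgroups of ${\rm SL}_2(\mathbb C)$ and the correspondence between finite $G$-orbits of lines in the solution space and algebraic solutions of the Riccati equation is exactly that argument. No discrepancy to report.
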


Let us consider now $\tau = \psi_1 / \psi_2$, which is a solution of \eqref{eq:es2}, and the tower of differential fields,
$$\mathcal K \subseteq \mathcal K\langle \tau \rangle \subseteq K\langle \psi_1,\psi_2 \rangle.$$
From equation \eqref{sigma} we have that,
$$\sigma(\tau) = \frac{a_\sigma \tau + b_\sigma}{c_\sigma \tau + d_\sigma} \in \mathbb C\langle \tau \rangle$$
It follows that $\mathcal K \subset\mathcal K\langle \tau \rangle$ is a Picard-Vessiot extension whose differential Galois group is the quotient of $G$
by the stabilizer of $\tau$,
$${\rm Aut}(\mathcal K\langle \tau \rangle / \mathcal K) = \overline G = G / (G \cap \{ {\rm I}, -{\rm I}\}).$$
Note that cases 1, 2, 3 of Proposition \ref{T:44} will lead us to a Liouvillian extension $\mathcal K \subseteq \mathcal K\langle \tau \rangle$ and $\overline G$ a proper subgroup of ${\rm PSL}_2(\mathbb C)$ and case 4 implies the non-existence of Liouvillian solutions for \eqref{eq:es2} and $\overline G = {\rm PSL}_2(\mathbb C)$.

\section{$\mathcal{D}$-Groupoids}\label{S.2}

The notion of $\mathcal D$-groupoid was introduced by B. Malgrange in \cite{malgrange2001} in the context of non-linear differential Galois theory. We may also call then algebraic Lie pseudogroups, as they are systems of algebraic differential equations whose solutions form a Lie pseudogroup. The original proposal allowed differential equations that where analytic in the base and algebraic on the derivatives, but later formulations restricted the definition to algebraic differential equations. Here, we will give a definition  that is equivalent to definition 5.2 in \cite{malgrange2010pseudogroupes} (Definition \ref{def_groupoid}); the equivalence between these definitions can be found in appendix A of \cite{blazquez2020malgrange}.

From this part on, in order to make the notation simple the derivative symbol $f'$ will be used with the meaning of the derivative with respect to $\lambda$.

\subsection{Jets of biholomorphisms}
\label{S.2.1}

The jet space $J^k(X,X)$ is defined as the set of equivalence classes of contact of order $\geq k$ at points of $X$ of local biholomorphisms from open subsets of $X$ to open subsets of $X$.\footnote{To be more precise, in general, taking a $\pi \colon E \to M$ a submersion between varieties; $\varphi_1$, $\varphi_2$ local sections defined around $p\in M$ have contact of order $\geq k$ if $d\varphi_1$, and $d\varphi_2$ have contact of order $\geq k-1$, that is, $\varphi_1(p)=\varphi_2(p),\cdots, d_p(d^{k-1}\varphi_1)=d_p(d^{k-1}\varphi_2)$. A local holomorphism is seen as a local section of the trivial bundle $\pi_1 \colon X\times X \to X$.} This space
$J^k(X,X)$ has a natural structure of algebraic variety (see, for instance \cite{blazquez2020malgrange}), and it contains the open subset of jets of biholomorphism:
$${\rm Aut}_k(X) := J_*^k(X,X)$$
Jets of biholomorphisms may be composed and inverted taking into account their sources and targets, so that ${\rm Aut}_k(X)$ is an algebraic groupoid over $X$;
the groupoid of $k$-jets of invertible local biholomorphisms on $X$. 

In order to clarify the algebraic structure of ${\rm Aut}_k(X)$ let us examine the case in which $X\subseteq \mathbb C$ is an affine subset of the complex numbers, with coordinate $\lambda$. The general case is recovered by gluing coverings of that case. 

A $(k+2)$-tuple $( \lambda_0, \varphi_0, \varphi'_0,...,\varphi^{(k)}_0)$ corresponds to the $k$-jet in $\lambda_0$ of any biholomorphism of the form:

$$\varphi: \lambda \mapsto \varphi_0+\varphi_0'(\lambda-\lambda_0)+\frac{\varphi_0''}{2}(\lambda-\lambda_0)^2+ \cdots+\frac{\varphi_0^{(k)}}{k!}(\lambda-\lambda_0)^k+o(\lambda-\lambda_0)^{k+1}$$

From now on we do not mention the subindex zero, so $\lambda$, $\varphi$, $\varphi'$, $\ldots$, $\varphi^{(k)}$ is a system of coordinates in ${\rm Aut}_k(X) \simeq X \times X \times \mathbb C^* \times \mathbb C^{k-1}$. The same direct product decomposition is possible for an affine algebraic curve $X$, providing that the vector field $\frac{d}{d\lambda}$ has neither zeros or poles in $X$.  The composition law in ${\rm Aut}_k(X)$ is given by Fa\`a di Bruno formulae, and thus, it is polynomial in the coordinates of ${\rm Aut}_k(X)$ wich turns out to be an algebraic groupoid over $X$. Truncations ${\rm Aut}_k(X)\to {\rm Aut}_{k-1}(X)$ are compatible with composition and inversion. Hence, the projective limit ${\rm Aut}(X):= \varprojlim {\rm Aut}_k(X)$ inherits the groupoid structure. Elements of ${\rm Aut}(X)$ are formal local biholomorphisms, not necessarily convergent. 

\begin{definition} The following types of algebraic subvarieties of ${\rm Aut}_k (X)$ will be considered.
\begin{enumerate}
    \item A strict algebraic subgroupoid of ${\rm Aut}_k(X)$ is a Zariski closed subset $Z \subset {\rm Aut}_k(X)$ which is also a smooth subgroupoid.
    \item A rational subgroupoid $\mathcal{G}_k \subset {\rm Aut}_k(X)$ is a Zariski closed subset such that on an open $U \subset X$,  $\mathcal G_k|_U$ is dense in $\mathcal G_k$ and is a strict algebraic subgroupoid of ${\rm Aut}_k(U)$.
\end{enumerate}
\end{definition}

\subsection{Zariski Topology of ${\rm Aut}(X)$}

Let us consider the case in which $X$ is an affine curve and the vector field $\frac{d}{d\lambda}$ has neither zeroes or poles, so that
${\rm Aut}_k(X) \simeq X \times X \times \mathbb C^* \times \mathbb C^{k-1}$ is an affine algebraic manifold with ring of regular functions $\mathcal O_{{\rm Aut}_k(X)}$. Taking limit $k\to\infty$ we obtain the proalgebraic variety ${\rm Aut}(X)$ with ring of regular functions,
$$\mathcal O_{{\rm Aut}(X)} = \bigcup_k \mathcal O_{{\rm Aut}_k(X)}.$$
The general case is similar but instead of 
ideals of the ring of regular functions $\mathcal O_{{\rm Aut}_k(X)}$ we would be forced to use coherent sheaves of ideals of its structural sheaf.

A \textit{Zariski closed} subset $Z$ of ${\rm Aut}(X)$ is defined by a radical ideal $\mathcal E \subset \mathcal O_{{\rm Aut}(X)}$.
The $k$-order truncation of $Z$ is the Zariski closed $Z_k$ of ${\rm Aut}_k(X)$, defined by the ideal $\mathcal E_k = \mathcal E \cap \mathcal{O}_{{\rm Aut}_k(X)}$, $Z_k:= V(\mathcal{E}_k)$.  In this way, $Z$ can be viewed as a sequence of closed Zariski sets $\{Z_k\}_{k\in \mathbb N}$ such that each projection $Z_k \to Z_{k-1}$ is dominant, that means
$$\cdots \longrightarrow Z_k \longrightarrow Z_{k-1} \longrightarrow \cdots \longrightarrow Z_0 \subset X \times X$$ is a dominant sequence.

\subsection{Kolchin Topology and definition of $\mathcal D$-groupoid}

The total derivative is a canonical way of extending derivations in $\mathcal O_X$ to derivations of $\mathcal{O}_{{\rm Aut}(X)}$.
Given a vector field $\vec{w}$ in  $X$, we define its total derivative,  

$$\vec{w}^{\textit{tot}}\colon \mathcal{O}_{{\rm Aut}_k(X)}\to \mathcal{O}_{{\rm Aut}_{k+1}(X)}, \quad
(\vec w^{tot}f)(j_x^{k+1}\varphi)=\vec w_x(f \circ j^k\varphi).$$

Let us recall that $\frac{d}{d\lambda}$ is a vector field without zeros nor poles in $X$. Through the total derivation mechanism it extends to a derivation of $\mathcal{O}_{{\rm Aut}(X)}$ that we denote by the same symbol (without the ${}^{tot}$ superscript), so that $\mathcal O_{\rm Aut(X)}$, endowed with 
$$\frac{d}{d\lambda}  = \frac{\partial}{\partial \lambda}  + \varphi' \frac{\partial}{\partial \varphi} + \varphi''\frac{\partial}{\partial \varphi'} + \ldots$$ 
is a differential ring.

An ideal $\mathcal{J}$ of $\mathcal{O}_{{\rm Aut}(X)}$ is an \textit{$\mathcal{D}$-ideal} if for every differential function $f$ in $\mathcal{J}$ the total derivatives of $f$ are also in $\mathcal{J}$. A subset $Y \subset {\rm Aut}(X)$ is a \textit{Kolchin closed} if it is a Zariski closed, whose ideal is a $\mathcal{D}$-ideal. That is, a closed set in Kolchin topology is given by the zeros of a radical $\mathcal{D}$-ideal.

\begin{definition}\label{def_groupoid}
A $\mathcal{D}$-groupoid $\mathcal{G}= \{\mathcal{G}_k\}_{k\in \mathbb{N}} \subset {\rm Aut}(X)$ is a Kolchin closed set such that for all $k$, $\mathcal{G}_k$ is a rational subgroupoid. The smaller $k$ such that $\mathcal G_k \subset {\rm Aut}_k(X)$ is called the order of $\mathcal G$.
\end{definition}

\begin{enumerate}
\item[a)] Given a $\mathcal D$-groupoid $\mathcal G$ then there is an open set $\mathcal U \subset X$ such that $\mathcal G|_{\mathcal U}$ is a groupoid over $\mathcal U$, see \cite{blazquez2020malgrange} appendix A.
\item[b)] Solutions of $\mathcal G$, meaning, local biholomorphisms $f\colon X\dasharrow X$\footnote{The dash arrow is used to denote maps defined on a open subset of $X$} such that for all $x$ in its domain $j_xf \in \mathcal G$ form a pseudogroup of transformations of $X$.
\item[c)] In the general setting $\mathcal G$ is defined by a system of algebraic PDE with as many independent variables and unknowns as the dimension of the base variety. As $X$ is one-dimensional then $\mathcal G$ is defined by ODE. Let us consider $\mathcal G$ of order $k$. As $\mathcal G_k$ dominates ${\rm Aut}_{k-1}(X)$ it must be an hypersurface of ${\rm Aut}_{k}(X)$. As ${\rm Aut}_{k}(X)$ is affine then the ideal of $\mathcal G_k$ is spanned  single 
element $F(\lambda, \varphi, \ldots, \varphi^{(k)})\in {\mathcal O}_{{\rm Aut}_k(X)}$. From the differential equation $F = 0$ we deduce,

$$\frac{d}{d\lambda} F =  \varphi^{(k+1)}\frac{\partial F}{\partial \varphi^{(k)}} + Q  = 0 ; \quad \varphi^{(k+1)} = -\frac{Q}{F_{\varphi^{(k)}}}$$

a $(k+1)$-order differential equation where the $(k+1)$-th derivative is expressed as a rational function of the lower degree derivatives, and same for higher order derivatives. Summarizing, a $\mathcal D$-groupoid of order $k$ on an affine algebraic curve $X$ with a non vanishing vector field is always determined by a single $k$-th order differential equation. 
\end{enumerate}

\subsection{$\mathcal D$-algebra of a $\mathcal D$-groupoid}

Consider the tangent bundle $TX \to X$, and define  ${\rm aut}_k(X) = J^k(TX/X)$ as the bundle of $k$-jets of sections of the tangent bundle. The vector bundle ${\rm aut}_k(X)\to X$ with its anchor ${\rm aut}_k(X)\to {\rm aut}_0(X) = TX$ is the Lie algebroid of ${\rm Aut}_k(X)$. Let us explore how elements of ${\rm aut}_k(X)$ are identified with vectors tangent to the identity in ${\rm Aut}_k(X)$. For this, consider, $j^k_x\vec{w}^{(k)} \in {\rm aut}_{k}(X)$ so that 

$$f(\lambda) = f(x)+f'(x)(\lambda-\lambda(x))+f''(x)\frac{(\lambda-\lambda(x))^2}{2}+\cdots$$

For $\varepsilon$ varying in $\mathbb C$, $j^k_x(\varepsilon \vec w)$ is a curve on ${\rm aut}_k(X)$.
If $\varepsilon$ is sufficiently small, The existence theorem for ordinary differential equations implies that the exponential 
$${\rm exp}(\varepsilon \vec w) \colon (X,x) \to (X,y(\varepsilon))$$ 
is an analytic map, where  $y(\varepsilon) = {\rm exp}(\varepsilon \vec w)x$. Thus, by taking the $k$-jet of exponential $j^k_x {\rm exp}(\varepsilon \vec{w})$ as a curve in ${\rm Aut}_k(X)$,
$$\Phi_k \colon {\rm aut}_k(X) \hookrightarrow T({\rm Aut}_k(X)), \quad
\vec w \mapsto \left.\frac{d}{d\varepsilon}\right|_{\varepsilon =0}j_x^k({\rm exp}(\epsilon \vec{w})),$$
so that each element of ${\rm aut}_k(X)$ is seen a a vector tangent to the identity in ${\rm Aut}_k(X)$ preserving 
the source map. Taking the projective limit $k\to\infty$ we obtain,
$$\Phi\colon {\rm aut(X)} \hookrightarrow T({\rm Aut}(X))$$
whose expression in coordinates is 
$$j_x \left(f(\lambda) \frac{\partial}{\partial \lambda}\right) \mapsto f(x) \frac{\partial}{\partial \varphi} + f'(x) \frac{\partial}{\partial \varphi_{\lambda}} + f''(x) \frac{\partial}{\partial \varphi_{\lambda\lambda}} + \ldots$$

Now, we are studying certain $\mathcal D$-groupoid $\mathcal G\subset {\rm Aut}(X)$. We may ask for which germs vector fields $j_x \vec w \in {\rm aut}(X)$ have exponential $j_x (\varepsilon \vec w)$ in $\mathcal G$. This happens if $\Phi(j_x \vec w)$ is a vector tangent to $\mathcal G$. 
This is the problem of linearizing the groupoid $\mathcal G$.

Consider some differential equation

$$F(\lambda,\varphi,\varphi',\varphi'',\ldots,\varphi^{(k)})=0$$

which vanish on the $\mathcal{G}$. Here $F\in {\mathcal O}_{{\rm Aut}(X)}$. We linearize $F$ by taking,

$$\ell F\left(j_x^k f(\lambda)\frac{\partial}{\partial \lambda} \right) = dF\left(\Phi_k\left(j_x^k f(\lambda)\frac{\partial}{\partial \lambda} \right)\right)$$

so that $\ell F \in {\mathcal O}_{{\rm aut}(X)}$ is in fact a linear form in the coefficients of the power series development of
$f(\lambda)$ and thus a linear differential equation for the unknown $f$.

We can now define ${\rm Lie}(\mathcal G_k) \subset {\rm aut}_k(X)$ as the rational linear bundle defined by all the linearizations $\ell F$ of functions vanishing on $\mathcal G_k$, and the $\mathcal D$-Lie algebra of $\mathcal G$ as the projective limit of this system ${\rm Lie}(\mathcal G) \subset {\rm aut}(X)$. It is a system of linear differential equations whose solutions are vector fields in $X$. Without discussing the structure of these objects, let us take note of the two following relevant facts:

\begin{enumerate}
    \item[a)] The Lie bracket of two solutions of ${\rm Lie}(\mathcal G)$, when defined, is also a solution of  ${\rm Lie}(\mathcal G)$.
    \item[b)] If $\mathcal G$ is determined by single differential equation of order $k$ then ${\rm Lie}(\mathcal G)$ is determined by a single linear differential equation of order $k$.
\end{enumerate}

More theoretical results on $\mathcal D$-Lie algebras in relation with $\mathcal D$-Lie groupoid can be found in \cite{malgrange2010pseudogroupes}.

\section{Kummer's groupoid}\label{S.3}

Let us consider $\varphi \colon X \dasharrow X$ an local biholomorphism. We want to check if $\sigma$ is compatible with the projective
structure induced by equation \eqref{eq:es2} in the sense that composition with $\sigma$ sends local projective charts on local projective
charts of the same structure. In other word, for each solution $\tau$ of \eqref{eq:es2} the composition $\tau\circ \sigma$ is also a solution. 
From the chain rule for the Schwarzian derivative we obtain:

$$S_{\lambda}(\tau \circ \varphi) = (S_\lambda (\tau)\circ \varphi)\varphi_\lambda^2+S_\lambda(\varphi)$$

Now, from equation \eqref{eq:es2} after composition with $\varphi$ we obtain $S_\lambda(\tau)\circ \varphi = R(\varphi)$. Finally we obtain a differential equation for $\varphi$ as function of $\lambda$:

 \begin{equation} \label{kummer}
    S_\lambda(\varphi) = R(\lambda)-R(\varphi)\varphi_\lambda^2.
\end{equation}

The above differential equation characterizes the symmetries of \eqref{eq:es1} and then it defines a $\mathcal D$-groupoid of $X$ which is the Kolchin closed subset $\mathcal G \subset {\rm Aut}(X)$ determined by the radical $\mathcal D$-ideal generated by $ S_{\lambda}(\varphi) - R(\lambda) + R(\varphi)\varphi_{\lambda}^2$. 
As this equation was first presented by Kummer in \cite{kummer}), we refer to $\mathcal G$ as Kummer's groupoid. This equation also appear in various different work during the last century : in Ritt's work on hypertranscendency of Koenings's linearisations \cite{ritt}, in Ecalle synthesis of binary parabolic diffeomorphisma \cite{ecalle}, or in the classification of rational transformations of $\mathbb{CP}_1$ preserving an rational geometric structure  \cite{casaleP1}.

In ${\rm Aut}_3(X)$, the rational subgroupoid  $\mathcal G_3$ consisting of $3$-jets of solutions of the Kummer equation is:
$$\mathcal{G}_3:= \{j_{\lambda}^3\varphi \vert \varphi\colon \lambda \mapsto \varphi(\lambda),\,\, R(\varphi)\varphi_{\lambda}^2+S_{\lambda}(\varphi)=R(\lambda)\}$$
It corresponds to the $3$-jets of local biholomorphisms of $X$ that are compatible with the projective structure. Note that $\mathcal G$ is defined by a single differential equation of order $3$, which is seen as a function on ${\rm Aut}_3(X)$. By applying total derivative with respect to $\lambda$ we obtain that all derivatives of order higher than two can be written as a function of $\lambda, \varphi,\varphi_{\lambda},\varphi_{\lambda\lambda}$ and therefore:
$$\mathcal G \simeq \mathcal G_k \simeq \ldots  \mathcal G_3 \simeq \mathcal G_2 = {\rm Aut}_2(X).$$
as algebraic varieties. Hence $\mathcal{G}$ is an algebraic Lie groupoid of complex dimension 4 and it is isomorphic, as Lie groupoid, to ${\rm Aut}_2(X)$.

\subsection{Linearization of the Kummer's equation}

Let us write the Kummer equation \eqref{kummer} 

$$R(\varphi)\varphi'^2+S-R(\lambda)=0$$

where the Schwarzian derivative is now seen as a function of the coordinates
in ${\rm Aut}_3(X)$

   $$S(\lambda,\varphi,\varphi',\varphi'',\varphi''') =\frac{\varphi'''}{\varphi'}-\frac{3}{2}\left( \frac{\varphi''}{\varphi'}\right)^2.$$
   
Let us consider $\vec w = f(\lambda)\frac{\partial}{\partial \lambda}$. We obtain by direct computation:
\begin{align*}
    \frac{\partial S}{\partial \varphi} &= 0; \\
    \frac{\partial S}{\partial \varphi'} &= -\frac{\varphi'''}{\varphi'^2}+2\frac{\varphi''}{\varphi'}\frac{\varphi''}{\varphi'^2} \equiv 0 \quad (\mbox{along }\mathcal G ); \\
    \frac{\partial S}{\partial \varphi''} &= -3\frac{\varphi''}{\varphi'^2} \equiv 0 \quad(\mbox{along }\mathcal G );\\
    \frac{\partial S}{\partial \varphi'''} &= \frac{1}{\varphi'} \equiv 1 \quad(\mbox{along }\mathcal G ); \\
    \Phi(j_x\vec{w}) S &= f'''(x); \\
    \Phi(j_x\vec{w}) R(\lambda) &= 0; \\
    \Phi(j_x\vec{w}) R(\varphi)\varphi'^2 &= f(x)R'(\lambda(x))\lambda'^2+2R(\lambda(x))f'(x)\lambda' 
   \\
   & \equiv f(x)R'(\lambda) +2R(\lambda(x))f'(x) \quad(\mbox{along }\mathcal G )
\end{align*}

And thus, the linearized differential equation is

\begin{equation}\label{linear:kummer}
f''' +2R(\lambda)f' +R'(\lambda)f=0.
\end{equation}

It provides the necessary and sufficient conditions for the vector field  $f(\lambda)\frac{\partial}{\partial \lambda}$ to be tangent to $\mathcal G$.
This linear differential equation, defines a linear and closed Kolchin sub-bundle ${\rm Lie}(\mathcal G)$ of  ${\rm aut}(X)$, known as \emph{$\mathcal{D}$-Lie algebra of the Kummer's groupoid} $\mathcal{G}$. \\

\subsection{Symmetric Power}

In this section we show the relation between the equation \eqref{linear:kummer} of ${\rm Lie}(\mathcal G)$ and the the Riccati equation
\eqref{riccati1}. For this purpose, we need to introduce the symmetric power of the second order linear differential equation \eqref{eq:linear}. As before,
let $\mathcal K$ the differential field consisting of $\mathbb C(X)$ endowed with $\frac{d}{d\lambda}$, and lat $\psi_1$, $\psi_2$ be a pair of linearly independent solutions, so that we have the following extensions of differential fields.

\[\begin{tikzcd}
	\mathcal K && {\mathcal K\langle\psi_1,\psi_2\rangle} \\
	& {\mathcal K\langle\psi_1\psi_2,\psi_1^2,\psi_2^2\rangle}
	\arrow[shorten <=8pt, shorten >=8pt, hook, from=1-1, to=1-3]
	\arrow[shorten <=3pt, shorten >=3pt, hook, from=1-1, to=2-2]
	\arrow[shorten <=3pt, shorten >=3pt, hook, from=2-2, to=1-3]
\end{tikzcd}\]

The three functions $\psi_i\psi_j$ for $i,j = 1,2$ generate the solution space of a third order equation, which is known as the  \emph{second symmetric power}. To derive it, consider $f = \psi^2$, where $\psi = a\psi_1 + b\psi_2$ is any solution of \eqref{eq:linear}, we have:

\begin{align*}
    f' &= 2\psi \psi' \\
    f'' &= 2\psi'^2+2\psi\psi''=2\psi'^2-R(\lambda)a \\
    f''' &= 4\psi'\psi'' -R'(\lambda)a-R(\lambda)a' \\
        &= 4\psi'\left(-\frac{1}{2}R(w)\psi\right) -R'(\lambda)a-R(\lambda)a' \\
        &= -2\psi\psi'R(\lambda) -R'(\lambda)a-R(\lambda)a' \\
        &= -2R(\lambda)a' -R'(\lambda)a
\end{align*}
So, rewriting

\begin{equation} \label{psim}
f''' + 2R(\lambda)f'+ R'(\lambda)f = 0    
\end{equation} 

Which is the same third order differential equation that \eqref{linear:kummer}. In this way, we say that the third order linear differential equation \eqref{linear:kummer} is the second symmetric power of the second order linear equation \eqref{eq:linear}. From the above diagram, we have that $\mathcal K \subset \mathcal K\langle \psi_1^2, \psi_2^2, \psi_1\psi_2\rangle$ is the Picard-Vessiot extension of the third order equation \eqref{psim}.

\begin{proposition} 
The Galois groups ${\rm Aut}(\mathcal K\langle \psi_1^2,\psi_2^2,\psi_1\psi_2 \rangle/\mathcal K)$ and ${\rm Aut}(\mathcal K\langle \psi_1,\psi_2 \rangle/\mathcal K)$ have isomorphic Lie algebras.
\end{proposition}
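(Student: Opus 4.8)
The plan is to read off both Galois groups from the tower of Picard--Vessiot extensions
$$\mathcal K \subseteq \mathcal K\langle \psi_1^2,\psi_2^2,\psi_1\psi_2\rangle \subseteq \mathcal K\langle \psi_1,\psi_2\rangle,$$
and to show that the middle field is obtained from the top field by factoring out a \emph{finite} normal subgroup of $G = {\rm Aut}(\mathcal K\langle \psi_1,\psi_2\rangle/\mathcal K)$. Since a finite group is $0$-dimensional, passing to such a quotient does not change the Lie algebra, which gives the claim at once.

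First I would invoke the fundamental Galois correspondence of Picard--Vessiot theory recalled in Subsection \ref{SS.12}. Both $\mathcal K\langle \psi_1,\psi_2\rangle$ and its subfield $\mathcal K\langle \psi_1^2,\psi_2^2,\psi_1\psi_2\rangle$ are Picard--Vessiot over $\mathcal K$, the latter being the Picard--Vessiot extension of the symmetric power \eqref{psim} as observed just above. Hence the subfield corresponds to a normal algebraic subgroup $N \trianglelefteq G$, namely the pointwise stabiliser $N = \{\sigma \in G : \sigma(\psi_i\psi_j) = \psi_i\psi_j,\ i,j=1,2\}$, and restriction of automorphisms yields a surjection $G \twoheadrightarrow {\rm Aut}(\mathcal K\langle \psi_1^2,\psi_2^2,\psi_1\psi_2\rangle/\mathcal K)$ with kernel $N$; that is, the Galois group of the symmetric power is $G/N$.

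Next I would identify $N$ explicitly. Writing the action of $\sigma \in G \subseteq {\rm SL}_2(\mathbb C)$ as in \eqref{sigma} by a matrix $M_\sigma$, the induced action on the quadratic monomials $\psi_1^2, \psi_1\psi_2, \psi_2^2$ is precisely the second symmetric power ${\rm Sym}^2(M_\sigma)$ on the $3$-dimensional space ${\rm Sym}^2(\mathbb C^2)$. Thus $N = G \cap \ker({\rm Sym}^2)$, and it remains to check that $\ker({\rm Sym}^2 \colon {\rm SL}_2(\mathbb C) \to {\rm GL}({\rm Sym}^2\mathbb C^2))$ is the centre $\{{\rm I},-{\rm I}\}$. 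Conceptually this is immediate: the kernel is a proper closed normal subgroup of ${\rm SL}_2(\mathbb C)$ (the representation is non-trivial), and since ${\rm PSL}_2(\mathbb C)$ is simple the only such subgroups are contained in the centre; as $-{\rm I}$ visibly fixes every quadratic monomial, the kernel is exactly $\{{\rm I},-{\rm I}\}$. (Alternatively, imposing ${\rm Sym}^2(M)={\rm I}$ on $M\in{\rm SL}_2(\mathbb C)$ is an elementary computation forcing $M=\pm{\rm I}$.) Consequently $N = G \cap \{{\rm I},-{\rm I}\}$ is finite, and $G/N$ is exactly the group $\overline G$ of Subsection \ref{SS.12}.

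Finally, since $N$ is finite it is $0$-dimensional, so ${\rm Lie}(N) = 0$. Differentiating the surjection $G \twoheadrightarrow G/N$ at the identity gives a surjective homomorphism of Lie algebras ${\rm Lie}(G) \to {\rm Lie}(G/N)$ whose kernel is ${\rm Lie}(N) = 0$, hence an isomorphism. This identifies ${\rm Lie}({\rm Aut}(\mathcal K\langle \psi_1,\psi_2\rangle/\mathcal K))$ with ${\rm Lie}({\rm Aut}(\mathcal K\langle \psi_1^2,\psi_2^2,\psi_1\psi_2\rangle/\mathcal K))$, as required. The only step demanding genuine care is the determination of $\ker({\rm Sym}^2)$; everything else is the formal Galois correspondence together with the fact that a quotient by a finite subgroup is an isogeny, hence a Lie-algebra isomorphism.
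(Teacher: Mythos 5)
Your proof is correct and follows essentially the same route as the paper: exhibit the Galois group of the symmetric-power extension as the quotient of $G$ by the finite normal subgroup fixing $\mathcal K\langle\psi_1^2,\psi_2^2,\psi_1\psi_2\rangle$, and conclude that an isogeny induces an isomorphism of Lie algebras. The only cosmetic difference is that you identify that subgroup as $G\cap\ker(\mathrm{Sym}^2)=G\cap\{\mathrm{I},-\mathrm{I}\}$ via the symmetric-square representation, whereas the paper simply observes that the extension $\mathcal K\langle\psi_1^2,\psi_2^2,\psi_1\psi_2\rangle\subseteq\mathcal K\langle\psi_1,\psi_2\rangle$ has degree at most two, so the corresponding normal subgroup is finite.
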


\begin{proof}
Let us examine the diagram of extensions of differential fields, with $G =  {\rm Aut}(\mathcal K\langle \psi_1,\psi_2 \rangle/\mathcal K)$, 
$H = {\rm Aut}(\mathcal K\langle \psi_1,\psi_2 \rangle/\mathcal K\langle \psi_1^2,\psi_2^2,\psi_1\psi_2 \rangle)$ so that, by Galois correspondence
we have that  ${\rm Aut}(\mathcal K\langle \psi_1,\psi_2 \rangle/\mathcal K)\simeq G/H$.
\[\begin{tikzcd}
	K && {K\langle \psi_1,\psi_2 \rangle} \\
	& {K\langle\psi_1^2,\psi_2^2,\psi_1\psi_2\rangle}
	\arrow["G", hook, from=1-1, to=1-3]
	\arrow["{G/H}"', from=1-1, to=2-2]
	\arrow["H"', from=2-2, to=1-3]
\end{tikzcd}\]
It suffices to observe that the second inclusion is either the identity or an algebraic extension of finite degree two. By the Galois correspondence, the Galois group of the small extension is a quotient of that of the large extension by a finite normal subgroup and therefore has the same Lie algebra. 
\end{proof}

\begin{proposition}\label{prop:equivalence}
The following statements are equivalent:     
\begin{itemize}
    \item[(a)] ${\rm Aut}(\mathcal K\langle \psi_1,\psi_2\rangle / \mathcal K) = {\rm SL}_2$.
    \item[(b)] ${\rm Aut}(\mathcal K\langle \psi^2\rangle / \mathcal K) = {\rm PSL}_2$.
    \item[(c)] The Riccati equation \eqref{riccati1} has no algebraic solution over $\mathcal K$.
\end{itemize}
\end{proposition}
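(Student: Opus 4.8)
The plan is to establish the two equivalences $(a)\Leftrightarrow(b)$ and $(a)\Leftrightarrow(c)$ separately, in each case reducing the question to the position of the identity component $G^0$ of the Galois group $G = {\rm Aut}(\mathcal K\langle\psi_1,\psi_2\rangle/\mathcal K)\subseteq {\rm SL}_2$ inside the ambient group. The only facts about ${\rm SL}_2(\mathbb C)$ that I will need are that it is connected of dimension $3$ and that every \emph{proper} connected algebraic subgroup has dimension $\le 2$, is therefore solvable, and hence by the Lie--Kolchin theorem stabilises a line of the standard representation $V=\langle\psi_1,\psi_2\rangle$, whereas ${\rm SL}_2$ itself acts irreducibly on $V$. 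This yields the basic dichotomy I will use throughout: $G={\rm SL}_2$ iff $G^0={\rm SL}_2$ iff $G^0$ fixes no line of $V$.

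For $(a)\Leftrightarrow(b)$ I would invoke the symmetric-square computation already carried out. The action of $G$ on the solution space $\mathrm{Sym}^2 V=\langle\psi_1^2,\psi_1\psi_2,\psi_2^2\rangle$ of \eqref{psim} is the composite of the inclusion $G\hookrightarrow {\rm SL}_2$ with the symmetric-square representation ${\rm SL}_2\to {\rm GL}(\mathrm{Sym}^2 V)$, whose kernel is $\{\pm{\rm I}\}$ and whose image is the copy of ${\rm PSL}_2$ acting on $\mathrm{Sym}^2 V$. Hence the Galois group in (b) is the image $\overline G=G/(G\cap\{\pm{\rm I}\})\subseteq {\rm PSL}_2$. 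If $G={\rm SL}_2$ this image is all of ${\rm PSL}_2$, proving $(a)\Rightarrow(b)$. Conversely, if $\overline G={\rm PSL}_2$ then $\dim G=\dim\overline G=3$ (either directly, because the kernel $\{\pm{\rm I}\}$ is finite, or by the preceding proposition on Lie algebras), so $G^0$ is a three-dimensional connected subgroup of ${\rm SL}_2$, forcing $G^0={\rm SL}_2$ and therefore $G={\rm SL}_2$; this is $(b)\Rightarrow(a)$.

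For $(a)\Leftrightarrow(c)$ I would set up the standard dictionary between solutions of the Riccati equation \eqref{riccati1} and lines in $V$: every solution is the logarithmic derivative $u=\psi'/\psi$ of some $\psi\in V$, and $u\mapsto \mathbb C\psi$ is a $G$-equivariant bijection between Riccati solutions and points of $\mathbb P(V)$. Under this correspondence $u$ is algebraic over $\mathcal K$ exactly when it has finitely many $G$-conjugates, i.e. when the orbit $G\cdot\mathbb C\psi$ is finite, i.e. when the stabiliser of $\mathbb C\psi$ has finite index in $G$, which for an algebraic group means precisely that $G^0$ fixes the line $\mathbb C\psi$. Combining this with the dichotomy of the first paragraph, \eqref{riccati1} has an algebraic solution over $\mathcal K$ iff $G^0$ stabilises some line of $V$ iff $G^0\neq {\rm SL}_2$ iff $G\neq {\rm SL}_2$; negating gives $(c)\Leftrightarrow(a)$.

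The main delicate point is the algebraicity criterion in the last paragraph: one must pair the correct notion of invariance, namely invariance under the identity component $G^0$ (a finite-index subgroup), with algebraicity over $\mathcal K$, as opposed to full $G$-invariance, which would instead characterise \emph{rational} solutions and would isolate Case 1 of Proposition \ref{T:44} alone. Keeping this distinction precise is exactly what lets the argument treat Cases 1--3 uniformly as ``$G^0$ stabilises a line'' and match them against the single alternative $G^0={\rm SL}_2$ of Case 4. A secondary point requiring care is the passage from equality of dimensions (or of Lie algebras) to equality of groups in $(b)\Rightarrow(a)$, which rests on the connectedness and maximality of ${\rm SL}_2$, so that a full-dimensional Zariski-closed subgroup cannot be proper.
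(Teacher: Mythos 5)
Your proof is correct. The $(a)\Leftrightarrow(b)$ part coincides with the paper's argument: both identify the Galois group of the symmetric square as the image $G/(G\cap\{\pm{\rm I}\})\subseteq{\rm PSL}_2$ and use that a full-dimensional closed subgroup of the connected group ${\rm SL}_2$ cannot be proper (the paper phrases this as ``${\rm PSL}_2$ is the only possible quotient of ${\rm SL}_2$ by a finite group'' via the Galois correspondence). For $(a)\Leftrightarrow(c)$ the paper simply cites its Proposition \ref{T:44} together with an exercise in van der Put--Singer, whereas you inline the underlying proof: the $G$-equivariant bijection between Riccati solutions and lines in $V=\langle\psi_1,\psi_2\rangle$, the characterization of algebraicity over $\mathcal K$ by finiteness of the $G$-orbit (equivalently, invariance under $G^0$), and the Lie--Kolchin dichotomy for connected subgroups of ${\rm SL}_2$. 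This is the standard argument behind the cited facts, so the mathematical content is the same, but your version is self-contained and has the merit of isolating the one genuinely delicate point --- that algebraic (as opposed to rational) Riccati solutions correspond to $G^0$-invariant lines rather than $G$-invariant ones --- which is exactly what lets Cases 1--3 of Proposition \ref{T:44} be treated uniformly against Case 4. The only caveat, which the paper shares and which does not affect correctness, is the implicit identification of the field $\mathcal K\langle\psi^2\rangle$ in statement (b) with the full Picard--Vessiot extension $\mathcal K\langle\psi_1^2,\psi_2^2,\psi_1\psi_2\rangle$ of the symmetric-square equation.
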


\begin{proof}
From the Galois correspondence \cite[Proposition 1.34 p. 25]{van2003galois} applied to the above diagram  (a)$\Longleftrightarrow$(b), since ${\rm PSL}_2$ is the only possible quotient of ${\rm SL}_2$ by a finite group. The equivalence (a)$\Longleftrightarrow$(c) is consequence of Proposition \ref{T:44} (see \cite[Excercise 1.36 page 28]{van2003galois}).
\end{proof}

We can now proof the main result of this article. 

\begin{thm}\label{MR}
If the Ricatti equation $\eqref{riccati1}$ has no algebraic solutions, then the Kummer  $\mathcal D$-groupoid $\mathcal G$ has no sub-$\mathcal D$-groupoids of order greater than $0$.
\end{thm}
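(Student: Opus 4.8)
The plan is to pass from the groupoid $\mathcal{G}$ to its $\mathcal{D}$-Lie algebra ${\rm Lie}(\mathcal{G})$ and reduce the statement to the irreducibility of a Picard--Vessiot module. First I would record the consequence of the hypothesis: since the Riccati equation \eqref{riccati1} has no algebraic solution, Proposition \ref{prop:equivalence} gives that the differential Galois group over $\mathcal{K}$ of the symmetric power \eqref{psim} is $\overline{G} = {\rm PSL}_2$. Recall that ${\rm Lie}(\mathcal{G})$ is cut out by equation \eqref{linear:kummer}, which coincides with the symmetric power \eqref{psim}; hence its space of solutions is the three-dimensional $\mathbb{C}$-vector space $V = \langle \psi_1^2, \psi_1\psi_2, \psi_2^2 \rangle$ sitting inside the Picard--Vessiot extension, on which $\overline{G}$ acts through the symmetric square ${\rm Sym}^2$ of the standard representation. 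Since ${\rm Sym}^2$ is the $3$-dimensional irreducible representation of ${\rm PSL}_2$, the module $V$ has no proper nonzero $\overline{G}$-invariant subspace.

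Next, I would take a sub-$\mathcal{D}$-groupoid $\mathcal{H}\subseteq \mathcal{G}$ of order $>0$ and aim to prove $\mathcal{H}=\mathcal{G}$. Linearizing yields a sub-$\mathcal{D}$-Lie algebra ${\rm Lie}(\mathcal{H})\subseteq {\rm Lie}(\mathcal{G})$, whose solution space is a $\mathbb{C}$-subspace $W\subseteq V$. Two facts drive the argument. On one hand, order $>0$ means precisely that the order-zero truncation $\mathcal{H}_0$ is all of ${\rm Aut}_0(X)=X\times X$; hence, over the open set where $\mathcal{H}$ is a groupoid, the algebroid ${\rm Lie}(\mathcal{H})$ is transitive and its anchor ${\rm Lie}(\mathcal{H})\to TX$ is surjective, which forces $W\neq 0$. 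On the other hand, ${\rm Lie}(\mathcal{H})$ is defined by linear differential equations with coefficients in $\mathcal{K}$, so by the Galois correspondence of Picard--Vessiot theory $W$ is a $\overline{G}$-invariant subspace of $V$. Combining these with the irreducibility from the first paragraph forces $W=V$, and therefore ${\rm Lie}(\mathcal{H})={\rm Lie}(\mathcal{G})$.

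Finally, I would upgrade the equality of $\mathcal{D}$-Lie algebras to an equality of groupoids. Because $\mathcal{G}\simeq {\rm Aut}_2(X)$ is smooth, irreducible and connected of dimension $4$, a Kolchin-closed subgroupoid $\mathcal{H}$ sharing its $\mathcal{D}$-Lie algebra has the same dimension and hence coincides with $\mathcal{G}$. This shows there is no proper sub-$\mathcal{D}$-groupoid of order $>0$, which is the assertion of Theorem \ref{MR}.

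I expect the main obstacle to lie in the two linking steps of the middle paragraph, rather than in the representation-theoretic punchline. Concretely, the delicate points are (i) showing that \emph{positive order} genuinely produces a \emph{nonzero} sub-Lie algebra, i.e.\ ruling out the degenerate possibility ${\rm Lie}(\mathcal{H})=0$; this is exactly the case of discrete or finite symmetry groupoids, whose order turns out to be $0$ because their order-zero truncation $\mathcal{H}_0$ is already a proper subset of $X\times X$, so it is excluded by the transitivity/surjective-anchor argument; and (ii) the precise dictionary identifying the $\mathcal{K}$-rational solution space of ${\rm Lie}(\mathcal{H})$ with a $\overline{G}$-submodule of $V$. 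Both become routine once formulated correctly, the former through the anchor of a transitive algebroid and the latter through the sub-module correspondence for linear differential operators, but they are where the geometry of $\mathcal{D}$-groupoids must be carefully matched with Picard--Vessiot theory.
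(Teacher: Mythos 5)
Your proof is correct, but it takes a genuinely different route from the paper's. The paper argues by cases on the order of $\mathcal H$: for order $3$ it uses that $\mathcal G_3$ is the graph of a section of ${\rm Aut}_3(X)\to{\rm Aut}_2(X)$, hence an irreducible hypersurface that any order-$3$ sub-groupoid must fill; for order $2$ it intersects the $2$-dimensional solution space of ${\rm Lie}(\mathcal H)$ with the cone $\{\psi^2\}\subset V$ to produce a square $\psi^2$ satisfying a second-order equation, and derives a contradiction with the non-existence of algebraic solutions of the Riccati equation \eqref{riccati1}; for order $1$ it runs a normal-subgroup and transcendence-degree argument through the Galois correspondence. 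You replace the order-$1$ and order-$2$ cases by a single step: the solution space $W$ of ${\rm Lie}(\mathcal H)$ is a nonzero, $\overline G$-invariant subspace of $V\cong{\rm Sym}^2(\mathbb C^2)$, which is irreducible when $\overline G={\rm PSL}_2$ (Proposition \ref{prop:equivalence}), so $W=V$; for $\mathcal H$ of order $1$ or $2$ this is outright impossible since $\dim W\le 2$, and for order $3$ it forces ${\rm Lie}(\mathcal H)={\rm Lie}(\mathcal G)$ and then $\mathcal H=\mathcal G$ by irreducibility of $\mathcal G_k\cong{\rm Aut}_2(X)$. What your approach buys is uniformity and a conceptual slogan (minimality of the groupoid reflects irreducibility of its linearization as a differential module), and it generalizes to any situation where the linearized module is irreducible; what it costs is that the two linking steps you flag carry real weight: that positive order (i.e.\ $\mathcal H_0=X\times X$) forces the anchor of ${\rm Lie}(\mathcal H)$ to be generically surjective, hence ${\rm rank}\,{\rm Lie}(\mathcal H)=\dim\mathcal H_k-\dim X\ge 1$ and $W\neq 0$, and that sub-$\mathcal D$-Lie algebras of ${\rm Lie}(\mathcal G)$ defined over $\mathcal K$ correspond exactly to $\overline G$-invariant subspaces of $V$. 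Both of these are standard (the first from the theory of transitive algebraic groupoids, the second from the Picard--Vessiot correspondence for flat subbundles of a connection), and the paper itself leans on facts of comparable depth from \cite{malgrange2010pseudogroupes} (e.g.\ that the linearization of an order-$k$ sub-groupoid is a single order-$k$ linear equation whose solutions lie in $V$), so your reliance on them is on par with the original argument.
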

\begin{proof}
    Assume that $\mathcal H \subseteq\mathcal G$ is a sub-$\mathcal D$-groupoid of order greater than $0$. Then the order of $\mathcal H$ is $1$, $2$ or $3$.
    Let us examine those cases separately. 
    \begin{enumerate}
    \item[a)] Let us assume that $\mathcal H$ is of order $3$. The equation \eqref{kummer} of $\mathcal G$ allows to express $\sigma'''$ as a rational function of $\lambda,\sigma,\sigma',\sigma''$. Therefore $\mathcal G_3$ is the graph of a section from ${\rm Aut}_2(X)$ to ${\rm Aut}_3(X)$. Therefore, it is an irreducible surface. As $\mathcal H$ is also an hypersuface of ${\rm Aut}_3(X)$ and it is contained in $\mathcal G_3$ then we have $\mathcal H_3 = \mathcal G_3$ and then $\mathcal H = \mathcal G$.
    
    \item[b)] Let us assume that $\mathcal H$ is of order $2$. Then ${\rm Lie}(\mathcal H)$ is determined by a second order differential equation
    with coefficients in $\mathcal K = \mathbb C(X)$.
    
    \begin{equation}\label{eq:aux}
    f'' + \alpha(\lambda) f' + \beta(\lambda) f = 0
    \end{equation}
    
    As ${\rm Lie}(\mathcal H) \subset {\rm Lie}(\mathcal G)$ then, the $3$-dimensional solution space of the equation \eqref{linear:kummer} contains 
    two linearly independent solutions of \eqref{eq:aux}. Note that this $3$-dimensional space is spanned by $\psi_1^2$, $\psi_2^2$ and $\psi_1\psi_2$ where $\psi_1$ and $\psi_2$ are linearly independent solutions of \eqref{eq:linear}. Inside this $3$-dimensional space, the set of elements of the form $\psi^2$ with $\psi$ a solution of \eqref{eq:linear} form a cone. In a complex $3$-dimensional vector space every plane intersects a cone on $2$ lines or a double line. Thus, there is a solution $f = \psi^2$ of \eqref{eq:aux} which is a square of a solution of \eqref{eq:linear}. Replacing and using the original equation,
    
    $$(\beta(\lambda) - R(\lambda))\psi^2 + 2\alpha(\lambda) \psi\psi' + 2\psi'^2 = 0$$
    
    \begin{enumerate}
    \item[b.i)] If $\beta(\lambda) = R(\lambda)$, we have $2\psi'(\psi\alpha(\lambda) - \psi') = 0$.
          In that case, we also have that either $\psi$ is constant or $\psi'/\psi = \alpha(\lambda)$ is a solution of the Riccati equation \eqref{riccati1}. Both cases contradict the hypothesis.

    \item[b.ii)] If  $\beta(\lambda) = R(\lambda)$, there are algebraic functions of degree one or two such that: 
    $$(\beta(\lambda) - R)(\psi - \gamma_1(\lambda)\psi')(\psi - \gamma_2(\lambda)\psi') = 0$$
    It implies that $\psi$ satisfies a linear first order equation with a coefficient in an algebraic extension of $\mathcal K$, so that it is a Liouvillian function, which contradicts the hypothesis (by proposition \ref{T:44}, there is no Liouvillian solutions of \eqref{eq:linear}).
    \end{enumerate}

    \item[c)] Let us assume that $\mathcal H$ is of order $1$. Then ${\rm Lie}(\mathcal H_1)$ is determined by a linear differential equation of order one    
    whose solution space is contained in the solution space of \eqref{linear:kummer}. Let us consider then a solutions $\psi$ of this equation inside the Picard-Vessiot extension $\mathcal K\langle \psi_1^2,\psi_2^2,\psi_1\psi_2 \rangle$ of equation \eqref{linear:kummer}. We have a tower of Picard-Vessiot extensions,
    $$\mathcal K \subseteq \mathcal K\langle f \rangle \subset \mathcal K\langle \psi_1^2,\psi_2^2,\psi_1\psi_2 \rangle.$$
    As $\mathcal K \subseteq \mathcal K\langle f \rangle$ is Picard-Vessiot then the group of automorphisms of $\mathcal K\langle \psi_1^2,\psi_2^2,\psi_1\psi_2 \rangle$ fixing $ \mathcal K\langle f \rangle$ is a normal subgroup of ${\rm PSL}_2(\mathbb C)$. As ${\rm PSL}_2(\mathbb C)$ it can be either the identity or ${\rm PSL}_2(\mathbb C)$.
    \begin{enumerate}
    \item[c.i)] If it is the identity, then by the Galois correspondence the group of automorphisms of $\mathcal K\langle f \rangle$ over $\mathcal K$ should be isomorphic to ${\rm PSL}_2(\mathbb C)$. But this is impossible as the dimension of the Galois group, which is $3$, must coincide with the transcendence degree the extension, which is $1$. 
    \item[c.ii)] If it is ${\rm PSL}_2(\mathbb C)$ then by Galois correspondence $\psi \in \mathcal K$, but $f$ is a non-vanishing solution of \eqref{linear:kummer} that does not have Liouvillian solutions. 
    \end{enumerate}
    \end{enumerate}
Therefore $\mathcal G$ does not have any sub-$\mathcal D$-groupoid of order greater than $0$. 
\end{proof}

\subsection{Strong Minimality}

The notion of strong minimality comes from model theory and in the particular case of differential equations generalizes and unifies some classical notions of irreducibility. We say that an algebraic differential equation of third order \eqref{eq:es1} 
is \emph{strongly minimal} if and only if for every differential field  $\mathcal L$ (of which it is assumed that it includes the algebraic number) and every solution $j$ 
the transcendence degree ${\rm tr. deg.}_{\mathcal L} \mathcal L\langle j \rangle$ is necessary $0$ or $3$. That means no solution 
can satisfy a lower order equation unless it is an algebraic solution. A differential field  $\mathcal L$ for which there exists a solution  $j$ of \eqref{eq:es1} such that the transcendence degree of $\mathcal L\langle j \rangle$ over $\mathcal L$ is $1$ or $2$ is called a  \emph{witness of non-strong minimality}.

The proof of strong minimality of the Schwarzian equation \eqref{eq:es1} with $R(\lambda)\in \mathbb C(\lambda)$ can be found in \cite{casale2020axlindemannweierstrass}. 

\begin{proposition}[cf. \cite{casale2020axlindemannweierstrass} Th. 3.2]
Assume $R(\lambda)\in \mathbb C(\lambda)$. If the Riccati equation \eqref{riccati1} has no algebraic solutions, then the equation \eqref{eq:es1} is strongly minimal.  
\end{proposition}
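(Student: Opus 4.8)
The plan is to establish the contrapositive in geometric terms: out of any witness of non-strong-minimality I will manufacture an algebraic solution of the Riccati equation \eqref{riccati1}. First I would reformulate the hypothesis through the dictionary already assembled. By Proposition~\ref{prop:equivalence}, the assumption that \eqref{riccati1} has no algebraic solution over $\mathcal K$ is equivalent to the Galois group of \eqref{eq:linear} being ${\rm SL}_2$, and hence to the Galois group of \eqref{eq:es2} being ${\rm PSL}_2$. In the language of Section~\ref{SS.11} this means that the foliation $\mathcal F=\langle D_\tau\rangle$ is a ${\rm PSL}_2$-connection on the principal bundle $\pi\colon J\to X$ whose Galois group is the entire structure group; equivalently the solutions of \eqref{eq:es1} form a single ${\rm PSL}_2(\mathbb C)$-orbit under the reparametrizations $\tau\mapsto\sigma(\tau)$, $\sigma\in{\rm PSL}_2(\mathbb C)$, of the independent variable.

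Assume, for contradiction, that \eqref{eq:es1} is not strongly minimal and fix a witness: a differential field $\mathcal L\supseteq\overline{\mathbb Q}$ and a non-algebraic solution $\lambda(\tau)$ with $d:={\rm tr.deg.}_{\mathcal L}\mathcal L\langle\lambda\rangle\in\{1,2\}$. Prolonging $\lambda$ to its $2$-jet places the trajectory on a leaf of $\mathcal F$ inside $J$, and the intermediate transcendence degree says exactly that this leaf is not Zariski dense in the fibres: its closure over $\mathcal L$ is a proper $\mathcal F$-invariant algebraic subvariety $W\subsetneq J$ of positive codimension. The decisive step is then to read off what such a $W$ imposes on the ${\rm PSL}_2$-connection. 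A proper $\mathcal F$-invariant subvariety in a principal ${\rm PSL}_2$-bundle forces a reduction of the structure group to a proper algebraic subgroup $H\subsetneq{\rm PSL}_2$, namely the stabilizer of the corresponding projective datum. Going through the conjugacy classes of such $H$ — a Borel subgroup, the normalizer of a torus, or a finite subgroup — recovers precisely Cases~1, 2 and 3 of Proposition~\ref{T:44}: an invariant point of the projective line, an invariant unordered pair, or a finite invariant set. In each case the corresponding invariant line(s) in the solution space of \eqref{eq:linear} produce an algebraic solution of \eqref{riccati1} (rational, quadratic, or algebraic of higher degree, respectively).

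The main obstacle — and the real content of the statement — is that the reduction just obtained lives a priori over the witness field $\mathcal L$, whereas \eqref{riccati1} and its coefficients live over $\mathcal K=\mathbb C(X)$. To contradict the hypothesis I must descend the invariant from $\mathcal L$ to $\mathcal K$. Here the rigidity of the configuration is essential: the bundle $\pi\colon J\to X$, the connection $\mathcal F$ and the ${\rm PSL}_2$-action are all defined over $\mathcal K$, and the equation \eqref{eq:es1} is autonomous, so the extra transcendentals carried by $\mathcal L$ are transverse to this structure. The set of $\mathcal F$-invariant subvarieties of the stated type is rigid, so an invariant defined over $\mathcal L$ is in fact defined over $\mathcal K^{\rm alg}$; this yields an algebraic solution of \eqref{riccati1} over $\mathcal K$, contradicting the hypothesis. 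Making this descent rigorous — ruling out that an intermediate leaf is produced purely by auxiliary parameters of $\mathcal L$ with no counterpart over the base — is the delicate point, and it is exactly the step carried out in \cite{casale2020axlindemannweierstrass}; it is where the simplicity of ${\rm PSL}_2$ and the defined-over-$\mathcal K$ character of the projective structure are used in an essential way.
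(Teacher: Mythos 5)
First, a framing remark: the paper does not actually prove this proposition --- it is imported from \cite{casale2020axlindemannweierstrass} (Theorem 3.2 there), and the sentence immediately preceding it explicitly defers the proof to that reference. So there is no internal argument to measure yours against; your sketch has to be judged on whether it stands on its own. Its overall shape (contrapositive; witness $\Rightarrow$ proper $\mathcal F$-invariant subvariety $\Rightarrow$ proper algebraic subgroup of ${\rm PSL}_2$ $\Rightarrow$ algebraic solution of \eqref{riccati1} via the classification Borel/torus normalizer/finite) is indeed the strategy of that line of work, so the outline is reasonable.

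But there is a genuine gap, and you name it yourself: the descent from the witness field $\mathcal L$ to $\mathcal K$ ``is exactly the step carried out in \cite{casale2020axlindemannweierstrass}.'' Since you also correctly identify that step as the entire content of the statement, deferring it to the very reference the proposition is quoting leaves nothing proved. Worse, the one justification you do offer --- that ``the set of $\mathcal F$-invariant subvarieties of the stated type is rigid, so an invariant defined over $\mathcal L$ is in fact defined over $\mathcal K^{\rm alg}$'' --- is false as a general principle. Strong minimality quantifies over \emph{all} differential fields $\mathcal L$, including those that already contain solutions: if $\mathcal L\supseteq\mathcal K\langle\psi_1\rangle$ for a single solution $\psi_1$ of \eqref{eq:linear}, then $\psi_1'/\psi_1\in\mathcal L$ is a solution of the Riccati equation \eqref{riccati1}, i.e.\ an $\mathcal F$-invariant section of the associated $\mathbb{P}^1$-bundle (a reduction to a Borel subgroup) defined over $\mathcal L$; when the Galois group over $\mathcal K$ is ${\rm SL}_2$ this section is \emph{not} defined over $\mathcal K^{\rm alg}$, by Proposition \ref{T:44}. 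So the Galois group over a compositum with $\mathcal L$ can be a proper subgroup of ${\rm PSL}_2$ even when the group over $\mathcal K$ is everything, and no rigidity of invariant subvarieties can bridge this; one must instead exploit the specific witness condition (transcendence degree $1$ or $2$ together with the ${\rm PSL}_2(\mathbb C)$-action on the solution set) to constrain the field of definition of the invariant object. That finer analysis \emph{is} the theorem, and it is absent from your argument. A secondary looseness of the same kind: a proper $\mathcal F$-invariant subvariety forces a reduction of structure group only over its own field of definition, so even your intermediate conclusion lives over $\mathcal L$, not over $\mathcal K$, before the descent is performed.
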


Furthermore, this criterium is a necessary and sufficient condition. 

\begin{proposition}
If the Riccati equation \eqref{riccati1} has some algebraic solution, then the non-linear Schwarzian equation \eqref{eq:es1} is not strongly minimal.  
\end{proposition}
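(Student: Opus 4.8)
The plan is to produce explicitly a \emph{witness of non-strong minimality}: a differential field $\mathcal L$ and a solution $j$ of \eqref{eq:es1} with ${\rm tr.deg.}_{\mathcal L}\mathcal L\langle j\rangle\in\{1,2\}$. Let $u$ be an algebraic solution of the Riccati equation \eqref{riccati1} granted by the hypothesis; since $\mathbb C(X)$ is finite over $\mathbb C(\lambda)$, it is algebraic over $\mathbb C(\lambda)$. Setting $\psi=\exp\!\left(\int u\right)$ one checks that $\psi'=u\psi$ and $\psi''=(u'+u^2)\psi=-\tfrac12 R\psi$, so $\psi$ solves \eqref{eq:linear}; reduction of order gives a second independent solution and the quotient $\tau:=\int\psi^{-2}$ solves the linear-Schwarzian equation \eqref{eq:es2}. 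A direct computation gives $\tau'=\psi^{-2}$ and $\tau''/\tau'=-2u$, that is, $\tau$ satisfies the \emph{second order} linear equation $\tau''+2u\,\tau'=0$ with coefficient algebraic over $\mathbb C(\lambda)$.

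Next I would pass to the functional inverse $\hat\lambda=\tau^{-1}$, which solves \eqref{eq:es1}: combining the Schwarzian identity for inverse maps $S_\tau(\lambda)=-\lambda_\tau^2\,\bigl(S_\lambda(\tau)\circ\hat\lambda\bigr)$ with $S_\lambda(\tau)=R(\lambda)$ gives exactly $S_\tau(\lambda)+\lambda_\tau^2 R(\lambda)=0$. Differentiating the inversion relation $\lambda_\tau=1/\tau_\lambda$ and substituting $\tau''=-2u\tau'$ yields the key identity $\lambda_{\tau\tau}=2\,u(\hat\lambda)\,\lambda_\tau^2$. Because $u$ is algebraic over $\mathbb C(\lambda)$, the element $u(\hat\lambda)$ is algebraic over $\mathbb C(\hat\lambda)$, so $\lambda_{\tau\tau}$ is algebraic over $\mathbb C(\hat\lambda,\lambda_\tau)$; and from \eqref{eq:es1} every higher derivative $\lambda_{\tau\tau\tau},\dots$ is a rational function of $\hat\lambda,\lambda_\tau,\lambda_{\tau\tau}$ and $R(\hat\lambda)$, hence again algebraic over $\mathbb C(\hat\lambda,\lambda_\tau)$.

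Taking $\mathcal L=\mathbb C$, the field of constants (which contains the algebraic numbers), I conclude that $\mathcal L\langle\hat\lambda\rangle\subseteq\mathbb C(\hat\lambda,\lambda_\tau)^{\rm alg}$, so ${\rm tr.deg.}_{\mathbb C}\mathbb C\langle\hat\lambda\rangle\le 2$; and it is $\ge 1$ because $\hat\lambda$ is a non-constant solution, hence transcendental over $\mathbb C$. Therefore this transcendence degree is $1$ or $2$ and \eqref{eq:es1} is not strongly minimal. I would emphasize that the argument is uniform: it uses only that \eqref{riccati1} has \emph{some} algebraic solution and never needs to separate Cases 1--3 of Proposition \ref{T:44}. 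The main obstacle I anticipate is the bookkeeping around the inverse map --- establishing rigorously that $\hat\lambda$ solves \eqref{eq:es1} and deriving the identity $\lambda_{\tau\tau}=2\,u(\hat\lambda)\,\lambda_\tau^2$ --- together with checking that the constant field $\mathbb C$ is a legitimate witness field so that the intermediate transcendence degree genuinely contradicts the definition of strong minimality; the remaining degree count is routine.
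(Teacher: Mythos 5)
Your proof is correct and follows essentially the same route as the paper's: both rest on the observation that an algebraic Riccati solution $u$ forces some solutions of \eqref{eq:es1} to satisfy the second-order equation $\lambda_{\tau\tau}=2u(\lambda)\lambda_\tau^2$, which bounds the relevant transcendence degree by $2$. You merely supply the derivation (via $\psi=e^{\int u}$, $\tau=\int\psi^{-2}$ and inversion of $\tau$) that the paper asserts happens ``automatically'', and you are more explicit about the choice of witness field.
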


\begin{proof}
Suppose that the Riccati equation \eqref{riccati1} has an algebraic solution  $u(\lambda)$ in $\mathbb C(\lambda)^{\rm alg}$. In that case, automatically,  some solutions of the Schwarzian equation satisfy as well:

$$\frac{\lambda_{\tau\tau}}{(\lambda_\tau)^2}=2u(\lambda) $$

which is a second-order equation. Hence,  $\mathbb C(\lambda)^{\rm alg}$ is witnesses the non-strong minimality of the equation.
\end{proof}
 
Putting together these results with our main Theorem \ref{MR} and Proposition \ref{T:44} we obtain the following. 
  
\begin{proposition}\label{prop_final}
Assume $R(\lambda)\in \mathbb C(\lambda)$. The following statements are equivalent:
\begin{enumerate}
    \item Riccati equation \eqref{riccati1} has no algebraic solutions.
    \item Kummer groupoid $\mathcal G$, defined by the equation \eqref{kummer}, has no proper sub-$\mathcal D$-groupoids of order greater than $0$.
    \item The linear equation \eqref{eq:linear} has Galois group ${\rm SL_2}(\mathbb C)$ over $\mathbb C(X)$.
    \item The Schwarzian equation \eqref{eq:es1} is strongly minimal.
     \item The linear-Schwarzian equation \eqref{eq:es2} has no Liouvillian solutions.
\end{enumerate}
\end{proposition}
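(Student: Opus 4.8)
The plan is to close a cycle of implications among the five statements, drawing the individual links from the results already established and isolating the single implication that requires genuinely new work. First I would record all the links that come for free. The equivalence (1)$\Leftrightarrow$(3) is precisely (a)$\Leftrightarrow$(c) of Proposition \ref{prop:equivalence}. The equivalence (3)$\Leftrightarrow$(5) follows from the discussion after Proposition \ref{T:44}: the four cases are mutually exclusive, case $4$ is the only one with $\overline G = {\rm PSL}_2$ (equivalently $G={\rm SL}_2$, by Proposition \ref{prop:equivalence}), and it is exactly the case with no Liouvillian solutions of \eqref{eq:es2}. The equivalence (1)$\Leftrightarrow$(4) is obtained by combining the two propositions on strong minimality: the first yields (1)$\Rightarrow$(4) and the contrapositive of the second yields (4)$\Rightarrow$(1). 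Finally, (1)$\Rightarrow$(2) is exactly Theorem \ref{MR}. At this point (1), (3), (4), (5) are mutually equivalent and each of them implies (2), so the whole proposition reduces to producing a single arrow out of (2).

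The remaining task is the contrapositive $\neg(1)\Rightarrow\neg(2)$: if the Riccati equation \eqref{riccati1} has an algebraic solution, then $\mathcal G$ admits a proper sub-$\mathcal D$-groupoid of order greater than $0$. Here I would reverse the analysis in the proof of Theorem \ref{MR}. By Proposition \ref{T:44} an algebraic Riccati solution places us in case $1$, $2$ or $3$, so $G\subsetneq {\rm SL}_2$, and the $\mathcal D$-Lie algebra ${\rm Lie}(\mathcal G)$, whose solution space $V=\langle \psi_1^2,\psi_1\psi_2,\psi_2^2\rangle\simeq {\rm Sym}^2$ of the solution space of \eqref{eq:linear} carries the induced action of $\overline G$. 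A proper nonzero $\overline G$-invariant subspace $U\subsetneq V$ is defined over $\mathcal K$, hence is the solution space of a linear subequation of \eqref{linear:kummer} of order $\dim U\in\{1,2\}$ with coefficients in $\mathcal K$; this is a proper sub-$\mathcal D$-Lie algebra, which I would then integrate to the sought proper sub-$\mathcal D$-groupoid by exhibiting the associated reduction of the ${\rm PSL}_2$-structure. So the burden is to produce such an invariant subspace, or rather the geometric reduction behind it.

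The two reducible/imprimitive cases are straightforward. In case $1$ a rational solution $u\in\mathcal K$ gives $f=\psi_1^2$ with $f'=2uf$, so $\mathbb C\psi_1^2$ is a $\overline G$-invariant line and $\langle\psi_1^2,\psi_1\psi_2\rangle$ a $\overline G$-invariant plane (the nilradical and the Borel subalgebra of $\mathfrak{sl}_2$); these realize the reduction of the projective structure to the stabilizer of a marked rational section, i.e. an affine reduction, whose symmetry pseudogroup is a sub-$\mathcal D$-groupoid of order $2$. In case $2$ the two conjugate degree-two solutions satisfy $u_++u_-\in\mathcal K$, so $\psi_1\psi_2$ spans a $\overline G$-invariant line over $\mathcal K$ (the Cartan line), giving a sub-$\mathcal D$-groupoid of order $1$.

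The hard part, and what I expect to be the main obstacle, is case $3$, where $\overline G$ is finite and primitive (tetrahedral, octahedral, or icosahedral). There ${\rm Sym}^2$ of the standard representation is irreducible, so $V$ has \emph{no} proper $\overline G$-invariant subspace and the construction above fails outright; in particular no lower-order linear subequation of \eqref{linear:kummer} exists over $\mathcal K$. To salvage $\neg(1)\Rightarrow\neg(2)$ in this case one must produce a positive-order sub-$\mathcal D$-groupoid by a different route — for instance from a $\overline G$-invariant binary form of higher degree (an invariant tensor living in a higher symmetric power rather than in ${\rm Sym}^2$), together with a verification that, modulo the Kummer equation \eqref{kummer}, it descends to a rational sub-$\mathcal D$-groupoid of $\mathcal G$ of order at most $2$. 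The classification of rational transformations preserving a rational geometric structure in \cite{casaleP1} is the natural tool here, and making this primitive case precise is where the real effort of the converse lies.
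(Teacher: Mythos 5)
Your reduction of the proposition to the single implication $\neg(1)\Rightarrow\neg(2)$ is exactly how the paper organizes the argument, and the links you draw among (1), (3), (4), (5), together with Theorem \ref{MR} for (1)$\Rightarrow$(2), are fine. The gap is in the remaining arrow. You construct a proper sub-$\mathcal D$-groupoid by first finding a proper $\overline G$-invariant subspace of $V={\rm Sym}^2$ of the solution space of \eqref{eq:linear}, i.e.\ a lower-order $\mathcal K$-rational linear subequation of \eqref{linear:kummer}, and then integrating. As you yourself observe, this fails outright in Case 3 of Proposition \ref{T:44}: for the primitive finite subgroups, ${\rm Sym}^2$ of the standard representation is irreducible, so no such subspace exists, and your proposed repair (higher-degree invariant binary forms, descent via \cite{casaleP1}) is only a sketch. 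Since Case 3 genuinely occurs under $\neg(1)$ (there every solution of \eqref{eq:linear}, hence every Riccati solution, is algebraic), the proof as written is incomplete; moreover, even in Cases 1 and 2 the passage from a sub-$\mathcal D$-Lie algebra back to a sub-$\mathcal D$-groupoid is asserted rather than carried out.

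The paper avoids the representation-theoretic detour entirely and treats all three cases uniformly. Given any algebraic solution $u\in\mathbb C(X)^{\rm alg}$ of \eqref{riccati1} --- of whatever degree --- a direct Schwarzian computation shows that every solution of $\tau_{\lambda\lambda}/\tau_\lambda=-2u(\lambda)$ solves \eqref{eq:es2}, so $u$ determines an affine structure subordinate to the projective structure. Its symmetry equation \eqref{eq:affine_sigma}, $2u(\varphi)\varphi_\lambda=2u(\lambda)+\varphi_{\lambda\lambda}/\varphi_\lambda$, is of order $2$ and its solutions lie inside those of \eqref{kummer}; precisely because $u$ is algebraic (and not merely Liouvillian), this equation --- taken together with its finitely many Galois conjugates, whose symmetrization has coefficients in $\mathcal K$ --- cuts out a Zariski closed proper subgroupoid of $\mathcal G$ of order $2$. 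The point you are missing is that the sub-$\mathcal D$-groupoid need not be visible as a $\mathcal K$-rational invariant subspace of the linearization: a finite Galois orbit of affine structures already yields a rational subgroupoid in the sense of the paper's definition, and this is exactly what saves the primitive Case 3. If you want to complete your argument, replace the search for an invariant subspace of ${\rm Sym}^2$ by this direct construction from $u$.
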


\begin{proof}
The only point we still need to check is the following. If there is an algebraic solution $u\in \mathbb C(X)^{\rm alg}$ of the Riccati equation \eqref{riccati1}, then $\mathcal G$ has some non trivial pseudogroup of order greater than zero. In order to clarify this point, let us see how solutions of the Ricatti equation \eqref{riccati1} allow us to reduce the linear-Schwarzian equation \eqref{eq:es2}. If $u(\lambda)$ is a solution of the Riccati equation, then let us look for a solution $\tau$ of the differential equation:

\begin{equation}\label{eq:affine} 
\frac{\tau_{\lambda\lambda}}{\tau_\lambda} = -2u(\lambda)
\end{equation}

We compute the Schwarzian derivative of $\tau$ obtaining:

$$S_{\lambda}(\tau) = (-2u(\lambda))_{\tau} - \frac{1}{2}(-2u(\lambda))^2 = -2(u'(\lambda) + u(\lambda)^2) = R(\lambda),$$

so that any solution of \eqref{eq:affine} is also a solution of \eqref{eq:es2}. Two any solutions of \eqref{eq:affine} with a
common domain of definition are related by an affine transformation of $\overline{\mathbb C}$. Thus, \eqref{eq:affine} is the differential equation
of an affine structure inside the projective structure determined by \eqref{eq:es2}. The inverses of the affine charts satisfy the differential equation,
\begin{equation}\label{eq:affine_lambda}
\frac{\lambda_{\tau\tau}}{\lambda_\tau^2} = 2u(\lambda).
\end{equation}
If we look for symmetries $\varphi$, such that for any solution $\lambda$ of \eqref{eq:affine_lambda}, the composition $\varphi\circ\lambda$ is also a solution, we arrive to the differential equation
\begin{equation}\label{eq:affine_sigma}
2u(\varphi)\varphi_{\lambda} = 2u(\lambda) + \frac{\varphi_{\lambda\lambda}}{\varphi_\lambda} .
\end{equation}
As $u$ is, in general, a transcendent function, equation \eqref{eq:affine_sigma} does not define an Zariski closed subset of ${\rm Aut}_2(X)$. However, if $u\in \mathbb C(X)^{\rm alg}$ then equation \eqref{eq:affine_sigma} defines a $\mathcal D$-groupoid of order $2$ contained in $\mathcal G$.
\end{proof}

It is interesting the criterium of strong-minimality for \eqref{eq:es1} coincides also with the simplicity of the $\mathcal D$-groupoid 
of its symmetries. It remains unclear to us a direct relation between the simplicity of the $\mathcal D$-groupoid and the strong minimality 
of the equation. It would be useful to know if the $\mathcal D$-groupoid can be used as a tool to detect strong minimality for some
other differential equations.

\bibliographystyle{plain}
\bibliography{biblio}

\end{document}